
\documentclass[a4paper,12pt]{amsart}
\usepackage{latexsym,amsmath,amssymb}

\setcounter{MaxMatrixCols}{10}

\newtheorem{thm}{Theorem}[section]

\newtheorem{lem}[thm]{Lemma}
\newtheorem{prop}[thm]{Proposition}
\theoremstyle{definition}
\newtheorem{defn}[thm]{Definition}
\theoremstyle{remark}

\numberwithin{equation}{section}

\begin{document}
\title[]{\textsc{\textbf{ Reducibility of WCE operators on $L^2(\mathcal{F})$}}}
\author{\sc\bf Y. Estaremi}
\address{\sc Y. Estaremi}
\email{yestaremi@pnu.ac.ir}

\address{Department of Mathematics, Payame Noor University , P. O. Box: 19395-3697, Tehran,
Iran}

\thanks{}
\thanks{}
\subjclass[2010]{47A15.}
\keywords{conditional expectation operator, invariant subspace, reducing subspace, sigma subalgebra.}
\date{}
\dedicatory{}

\begin{abstract}
In this paper we characterize the closed subspaces of $L^2(\mathcal{F})$ that reduce the operators of the form $E^{\mathcal{A}}M_u$, in which $\mathcal{A}$  is a $\sigma$- subalgebra of $\mathcal{F}$.  We show that $L^2(A)$ reduces $E^{\mathcal{A}}M_u$ if and only if $E^{\mathcal{A}}(\chi_A)=\chi_A$ on the support of $E^{\mathcal{A}}(|u|^2)$, where $A\in \mathcal{F}$. Also, some necessary and sufficient conditions are provided for $L^2(\mathcal{B})$ to reduces $E^{\mathcal{A}}M_u$, for the $\sigma$- subalgebra $\mathcal{B}$ of $\mathcal{F}$.
\end{abstract}

\maketitle

\commby{}


\section{\textsc{Introduction}}

Let $(X,\mathcal{F},\mu)$ be a complete $\sigma$-finite measure space.
For any sub-$\sigma$-finite algebra $\mathcal{A}\subseteq
 \mathcal{F}$, the $L^2$-space $L^2(X,\mathcal{A},\mu_{\mid_{\mathcal{A}}})$ is abbreviated  by
$L^2(\mathcal{A})$, and its norm is denoted by $\|.\|_2$. All
comparisons between two functions or two sets are to be
interpreted as holding up to a $\mu$-null set. The support of a
measurable function $f$ is defined as $S(f)=\{x\in X; f(x)\neq
0\}$. We denote the vector space of all equivalence classes of
almost everywhere finite valued measurable functions on $X$ by
$L^0(\mathcal{F})$. For a sub-$\sigma$-finite algebra
$\mathcal{A}\subseteq\mathcal{F}$, the conditional expectation operator
associated with $\mathcal{A}$ is the mapping $f\rightarrow
E^{\mathcal{A}}f$, defined for all non-negative measurable
function $f$ as well as for all $f\in L^2(\mathcal{F})$, where
$E^{\mathcal{A}}f$, by the Radon-Nikodym theorem, is the unique
$\mathcal{A}$-measurable function satisfying
$$\int_{A}fd\mu=\int_{A}E^{\mathcal{A}}fd\mu, \ \ \ \forall A\in \mathcal{A} .$$
As an operator on $L^{2}({\mathcal{F}})$, $E^{\mathcal{A}}$ is
idempotent and $E^{\mathcal{A}}(L^2(\mathcal{F}))=L^2(\mathcal{A})$.
This operator will play a major role in our work.\\
Composition of conditional expectation operators
and multiplication operators  appear often in the study of other
operators such as multiplication operators and weighted
composition operators. Specifically, in \cite{mo}, S.-T. C. Moy
characterized all operators on $L^p$ of the form $f\rightarrow
E^{\mathcal{A}}(fg)$ for $g$ in $L^q$ with $E^{\mathcal{A}}(|g|)$ bounded. And R. G. Douglas, \cite{dou}, analyzed positive projections on
$L^{1}$ and many of his characterizations are in terms of
combinations of multiplication operators and conditional expectations. Also, P.G. Dodds, C.B. Huijsmans and B. De Pagter, \cite{dhd},
extended these characterizations to the setting of function ideals
and vector lattices. Some other people studied weighted conditional expectation operators on measurable function spaces in \cite{g,her, lam} and references therein. Moreover,  we investigated some classic properties of weighted  conditional expectation
operators of the form $M_wE^{\mathcal{A}}M_u$ on $L^p$ spaces in \cite{e,es,ej}. In this paper we characterize reducible closed subspaces of $L^2(\mathcal{F})$ for the weighted conditional expectation operators of the form $E^{\mathcal{A}}M_u$.

\section{ \textsc{Reducibility of WCE operators}}

Throughout this section we assume that
$u\in\mathcal{D}(E^{\mathcal{A}}):=\{f\in
L^0(\mathcal{F}): E^{\mathcal{A}}(|f|)\in L^0(\mathcal{A})\}$.
Now we recall the definition of weighted conditional expectation operators on $L^2(\mathcal{F})$.
\begin{defn}
Let $(X,\mathcal{F},\mu)$ be a $\sigma$-finite measure space and let $\mathcal{A}$ be a
$\sigma$-subalgebra of $\mathcal{F}$ such that $(X,\mathcal{A},\mu_{\mathcal{A}})$ is
also $\sigma$-finite. Let $E^{\mathcal{A}}$ be the conditional
expectation operator relative to $\mathcal{A}$. If $u\in L^0(\mathcal{F})$ such that $uf$ is conditionable
and $E^{\mathcal{A}}(uf)\in L^{2}(\mathcal{F})$ for all $f\in \mathcal{D}\subseteq L^{2}(\mathcal{F})$,
where $\mathcal{D}$ is a linear subspace, then the corresponding weighted conditional
expectation (or briefly WCE) operator is the linear transformation
$E^{\mathcal{A}}M_u:\mathcal{D}\rightarrow L^{2}(\mathcal{F})$ defined by $f\rightarrow E^{\mathcal{A}}(uf)$.
\end{defn}
For a bounded WCE operator $T=E^{\mathcal{A}}M_u$ on the Hilbert space $L^{2}(\mathcal{F})$ we have
$$T^{\ast}=M_{\bar{u}}E^{\mathcal{A}}, \ \ \ T^{\ast}T=M_{\bar{u}}E^{\mathcal{A}}M_u, \ \ \ TT^{\ast}=E^{\mathcal{A}}M_{E^{\mathcal{A}}(|u|^2)}.$$
In this section we will be concerned with the reducibility of the  WCE operators of the form $E^{\mathcal{A}}M_u$. Recall that an operator $T$ on a Hilbert space $\mathcal{H}$ is reducible if there is a proper closed subspace $M$ of $\mathcal{H}$, other that $\{0\}$ and $\mathcal{H}$, for which $TM\subseteq M$ and $T^{\ast}M\subseteq M$. In this case we say that $M$ reduces $T$. Equivalently, $M$ reduces $T$ if and only if the orthogonal projection $P$ onto $M$ commutes with $T$. We shall make use of the fact that if a projection commutes with $T$, then it commutes with any polynomial in $T$ and $T^{\ast}$. Now we have the next proposition.

\begin{prop}\label{p1} Let $T=E^{\mathcal{A}}M_u$ be a bounded operator on $L^2(\mathcal{F})$, $M\subseteq L^2(\mathcal{F})$ be a subspace such that reduces $E^{\mathcal{A}}M_u$ and $P$ be the orthogonal projection onto $M$. Then we have the followings:

(a) $E^{\mathcal{A}}(uPf)=PE^{\mathcal{A}}(uf)$ for every $f\in L^2(\mathcal{F})$.\\

(b) $\bar{u}E^{\mathcal{A}}(Pf)=P(\bar{u}E^{\mathcal{A}}(f))$ for every $f\in L^2(\mathcal{F})$.\\

(c) $E^{\mathcal{A}}(|u|^2)E^{\mathcal{A}}(Pf)=PE^{\mathcal{A}}(E^{\mathcal{A}}(|u|^2)f)$ for every $f\in L^2(\mathcal{F})$.\\

(d) $\bar{u}E^{\mathcal{A}}(uPf)=P(\bar{u}E^{\mathcal{A}}(uf))$ for every $f\in L^2(\mathcal{F})$.\\

(e) $P(E^{\mathcal{A}}(|u|^2)E^{\mathcal{A}}(uf))=E^{\mathcal{A}}(|u|^2)P(E^{\mathcal{A}}(uf))$ for every $f\in L^2(\mathcal{F})$.

\end{prop}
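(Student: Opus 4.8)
The plan is to derive all five identities from the single principle emphasized just above the statement: since $M$ reduces $T=E^{\mathcal{A}}M_u$, the orthogonal projection $P$ onto $M$ commutes with $T$, with $T^{\ast}$, and hence with every polynomial in $T$ and $T^{\ast}$. The whole proposition then amounts to selecting, for each item, the appropriate operator built from $T$ and $T^{\ast}$, writing out the commutation relation $P(\,\cdot\,)=(\,\cdot\,)P$ acting on an arbitrary $f$, and substituting the explicit formulas $Tf=E^{\mathcal{A}}(uf)$, $T^{\ast}f=\bar u\,E^{\mathcal{A}}(f)$, $T^{\ast}Tf=\bar u\,E^{\mathcal{A}}(uf)$, and $TT^{\ast}f=E^{\mathcal{A}}(|u|^2)E^{\mathcal{A}}(f)=E^{\mathcal{A}}\!\big(E^{\mathcal{A}}(|u|^2)f\big)$ recorded at the start of the section.

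First I would dispatch the four routine items. Identity (a) is precisely $PT=TP$ read off on $f$, since $TPf=E^{\mathcal{A}}(uPf)$ and $PTf=P\,E^{\mathcal{A}}(uf)$. Identity (b) is precisely $PT^{\ast}=T^{\ast}P$, since $T^{\ast}Pf=\bar u\,E^{\mathcal{A}}(Pf)$ and $PT^{\ast}f=P\big(\bar u\,E^{\mathcal{A}}(f)\big)$. Identity (d) is the commutation of $P$ with $T^{\ast}T$: applying $T^{\ast}T$ to $Pf$ gives $\bar u\,E^{\mathcal{A}}(uPf)$ while $P\,T^{\ast}Tf=P\big(\bar u\,E^{\mathcal{A}}(uf)\big)$. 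Identity (c) is the commutation of $P$ with $TT^{\ast}$: using the first form $TT^{\ast}Pf=E^{\mathcal{A}}(|u|^2)E^{\mathcal{A}}(Pf)$ for the left-hand side, and the equivalent form $P\,TT^{\ast}f=P\,E^{\mathcal{A}}\!\big(E^{\mathcal{A}}(|u|^2)f\big)$ for the right-hand side, gives exactly the stated equality. Here the only point requiring attention is the harmless identity $E^{\mathcal{A}}(|u|^2)E^{\mathcal{A}}(g)=E^{\mathcal{A}}\!\big(E^{\mathcal{A}}(|u|^2)g\big)$, which holds because $E^{\mathcal{A}}(|u|^2)$ is $\mathcal{A}$-measurable and therefore pulls out of $E^{\mathcal{A}}$.

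The genuinely interesting item is (e), which I would obtain by letting $P$ commute with the cubic polynomial $TT^{\ast}T$. The key computation is the explicit form of this operator: since $Tf=E^{\mathcal{A}}(uf)$ is already $\mathcal{A}$-measurable, the inner conditional expectation in $TT^{\ast}(Tf)=E^{\mathcal{A}}(|u|^2)\,E^{\mathcal{A}}(Tf)$ collapses by idempotency of $E^{\mathcal{A}}$, giving $TT^{\ast}Tf=E^{\mathcal{A}}(|u|^2)\,E^{\mathcal{A}}(uf)$. Commuting $P$ with $TT^{\ast}T$ then yields $P\big(E^{\mathcal{A}}(|u|^2)E^{\mathcal{A}}(uf)\big)=E^{\mathcal{A}}(|u|^2)\,E^{\mathcal{A}}(uPf)$; a final application of part (a) to rewrite $E^{\mathcal{A}}(uPf)=P\,E^{\mathcal{A}}(uf)$ converts the right-hand side into $E^{\mathcal{A}}(|u|^2)\,P\big(E^{\mathcal{A}}(uf)\big)$, which is exactly (e).

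I expect the main obstacle to be purely the bookkeeping in (e): one must form the correct composite $TT^{\ast}T$ (rather than $TT^{\ast}$ applied to an arbitrary element), use that its argument $Tf$ is $\mathcal{A}$-measurable so the nested $E^{\mathcal{A}}$ simplifies, and then remember to invoke (a) to finish. Everything else is a direct transcription of the four commutation relations $PT=TP$, $PT^{\ast}=T^{\ast}P$, $P(TT^{\ast})=(TT^{\ast})P$, and $P(T^{\ast}T)=(T^{\ast}T)P$ into the pointwise functional identities asserted, so no serious analytic difficulty arises beyond the standard pull-out and idempotency properties of $E^{\mathcal{A}}$.
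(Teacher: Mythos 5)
Your proof is correct and takes essentially the same route as the paper: items (a)--(d) are exactly the commutation relations of $P$ with $T$, $T^{\ast}$, $TT^{\ast}$, and $T^{\ast}T$, read off on an arbitrary $f\in L^2(\mathcal{F})$. Your derivation of (e) by commuting $P$ with the polynomial $TT^{\ast}T$ (computing $TT^{\ast}Tf=E^{\mathcal{A}}(|u|^2)E^{\mathcal{A}}(uf)$ and then invoking (a)) is precisely the paper's step of ``applying $T$ to the equation in item (c),'' just written out in more detail.
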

\begin{proof} The items (a), (b), (c) and (d) follow from the fact that $P$ commute with $T$, $T^{\ast}$, $TT^{\ast}$ and $T^{\ast}T$, respectively. By applying $T$ to the equation in item (c) we get the item (e).
\end{proof}
Now by using Proposition \ref{p1} we give a necessary and sufficient conditions under which the subspace $L^2(A)$ reduces $E^{\mathcal{A}}M_u$ for $A\in \mathcal{F}$.
\begin{thm} Let $E^{\mathcal{A}}M_u$ be bounded on $L^2(\mathcal{F})$ and $A\in \mathcal{F}$. Then $L^2(A)$ reduces $E^{\mathcal{A}}M_u$ if and only if $E^{\mathcal{A}}(\chi_A)=\chi_A$ on $S(E^{\mathcal{A}}(|u|^2))$.
\end{thm}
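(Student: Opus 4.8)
The plan is to work with the orthogonal projection $P=M_{\chi_A}$ of $L^2(\mathcal{F})$ onto $L^2(A)$ and to use the criterion recalled above: $L^2(A)$ reduces $T=E^{\mathcal{A}}M_u$ if and only if $P$ commutes with $T$ (equivalently, with $T^{\ast}$, $T^{\ast}T$ and $TT^{\ast}$). Writing $w=E^{\mathcal{A}}(|u|^2)$, a non-negative $\mathcal{A}$-measurable function, the target identity $E^{\mathcal{A}}(\chi_A)=\chi_A$ on $S(w)$ is proved by establishing the two implications separately.

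For necessity I would feed the hypothesis into part (c) of Proposition \ref{p1}. Since $TT^{\ast}=E^{\mathcal{A}}M_w$ and $w$ is $\mathcal{A}$-measurable, that identity reads $w\,E^{\mathcal{A}}(\chi_A f)=\chi_A\,w\,E^{\mathcal{A}}(f)$ for every $f\in L^2(\mathcal{F})$. Using the $\sigma$-finiteness of $\mathcal{A}$, I test this against an increasing sequence $\chi_{C_n}$ with $C_n\in\mathcal{A}$, $\mu(C_n)<\infty$ and $\bigcup_n C_n=X$; because $C_n\in\mathcal{A}$ one may pull $\chi_{C_n}$ out of the expectations, and letting $n\to\infty$ yields $w\,E^{\mathcal{A}}(\chi_A)=\chi_A\,w$ almost everywhere. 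Dividing by $w$ on the set $S(w)$, where $w>0$, gives exactly $E^{\mathcal{A}}(\chi_A)=\chi_A$ on $S(w)$.

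For sufficiency, the first step is to reformulate the hypothesis: multiplying $E^{\mathcal{A}}(\chi_A)=\chi_A$ on $S(w)$ by $\chi_{S(w)}$ (note $S(w)\in\mathcal{A}$) shows that $\chi_{A\cap S(w)}=\chi_{S(w)}E^{\mathcal{A}}(\chi_A)$ is $\mathcal{A}$-measurable, i.e. $A\cap S(w)\in\mathcal{A}$ up to a null set. Two auxiliary facts are then needed: first, $u=0$ almost everywhere off $S(w)$, since $\int_B|u|^2\,d\mu=\int_B w\,d\mu=0$ for $B=S(w)^{c}\in\mathcal{A}$; second, by the conditional Cauchy--Schwarz inequality $|E^{\mathcal{A}}(uf)|^2\le w\,E^{\mathcal{A}}(|f|^2)$, so $E^{\mathcal{A}}(uf)$ is supported in $S(w)$ for every $f$. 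With these in hand the commutation $PT=TP$ is a direct computation: using $u=u\chi_{S(w)}$ and $A\cap S(w)\in\mathcal{A}$,
\[
E^{\mathcal{A}}(u\chi_A f)=E^{\mathcal{A}}(u\chi_{A\cap S(w)}f)=\chi_{A\cap S(w)}E^{\mathcal{A}}(uf)=\chi_A E^{\mathcal{A}}(uf),
\]
the last equality holding because $E^{\mathcal{A}}(uf)$ vanishes off $S(w)$. Hence $P$ commutes with $T$, and by the recalled equivalence $L^2(A)$ reduces $T$.

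The routine parts are the manipulations pulling $\mathcal{A}$-measurable factors through $E^{\mathcal{A}}$; the step requiring the most care is the bookkeeping on the support $S(w)$, in particular recognizing that the pointwise condition $E^{\mathcal{A}}(\chi_A)=\chi_A$ on $S(w)$ is equivalent to $A\cap S(w)\in\mathcal{A}$, and that both $u$ and every $E^{\mathcal{A}}(uf)$ are concentrated on $S(w)$, which is precisely what allows one to replace $\chi_{A\cap S(w)}$ by $\chi_A$. I also need the $\sigma$-finiteness of $\mathcal{A}$ to pass from the tested identity to the global one in the necessity direction.
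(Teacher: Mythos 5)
Your proof is correct and follows essentially the same route as the paper's: necessity via Proposition \ref{p1}(c) tested against an increasing sequence of finite-measure indicator functions, and sufficiency via the conditional Cauchy--Schwarz support inclusion $S(E^{\mathcal{A}}(uf))\subseteq S(E^{\mathcal{A}}(|u|^2))$ combined with the commutation criterion for the projection $M_{\chi_A}$. Your write-up is merely a bit more explicit than the paper's in the converse direction (spelling out that $u=0$ a.e.\ off $S(E^{\mathcal{A}}(|u|^2))$ and that $A\cap S(E^{\mathcal{A}}(|u|^2))\in\mathcal{A}$, which the paper leaves implicit), but the underlying argument is the same.
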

\begin{proof} If $L^2(A)$ reduces $E^{\mathcal{A}}M_u$, then by the Proposition \ref{p1} part (c) we have $E^{\mathcal{A}}(|u|^2)E^{\mathcal{A}}(\chi_Af)=E^{\mathcal{A}}(|u|^2)\chi_AE^{\mathcal{A}}(f)$ for all $f\in L^2(\mathcal{F})$. Let $\{F_n\}$ be a sequence in $\mathcal{F}$ increasing to $X$ with $\mu(F_n)<\infty$, and put $f_n=\chi_{F_n}$. Then $f_n\in L^2(\mathcal{F})$ and $f_n\uparrow 1$, hence $E^{\mathcal{A}}(f_n)\uparrow 1$ and $E^{\mathcal{A}}(\chi_Af_n)\uparrow E^{\mathcal{A}}(\chi_A)$. Thus we get that $E^{\mathcal{A}}(|u|^2)E^{\mathcal{A}}(\chi_A)=E^{\mathcal{A}}(|u|^2)\chi_A$ and so $E^{\mathcal{A}}(\chi_A)=\chi_A$ on $S(E^{\mathcal{A}}(|u|^2))$. Now we prove the converse, let $E^{\mathcal{A}}(\chi_A)=\chi_A$ on $S(E^{\mathcal{A}}(|u|^2))$. By conditional type Holder inequality we have that $S(E^{\mathcal{A}}(uf))\subseteq S(E^{\mathcal{A}}(|u|^2))$ and $S(E^{\mathcal{A}}(\chi_A)uf)\subseteq S(E^{\mathcal{A}}(|u|^2))$ for all $f\in L^2(\mathcal{F})$. Therefore we have $$\chi_AE^{\mathcal{A}}(uf)=E^{\mathcal{A}}(E^{\mathcal{A}}(\chi_A)uf)=E^{\mathcal{A}}(\chi_Auf),$$
this completes the proof.
\end{proof}
Let $\mathcal{B}\subseteq \mathcal{F}$ be a $\sigma$- subalgebra. In the next theorem we give a sufficient condition under which the subspace $L^2(\mathcal{B})$ reduces $E^{\mathcal{A}}M_u$.

\begin{thm}\label{t1} Let $E^{\mathcal{A}}M_u$ be bounded on $L^2(\mathcal{F})$. Then for every $\sigma$- subalgebra $\mathcal{B}\subseteq \mathcal{F}$ such that $\mathcal{B}\subseteq \mathcal{A}$ or $\mathcal{A}\subseteq \mathcal{B}$ and $u$ is $\mathcal{B}$-measurable, the subspace $L^2(\mathcal{B})$ reduces $E^{\mathcal{A}}M_u$.
\end{thm}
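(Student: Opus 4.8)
The plan is to exploit the fact, already recalled in the excerpt, that a closed subspace $M$ reduces $T=E^{\mathcal{A}}M_u$ precisely when the orthogonal projection onto $M$ commutes with $T$. The orthogonal projection of $L^2(\mathcal{F})$ onto $L^2(\mathcal{B})$ is exactly the conditional expectation operator $E^{\mathcal{B}}$, since $E^{\mathcal{B}}$ is idempotent, self-adjoint on $L^2(\mathcal{F})$, and has range $L^2(\mathcal{B})$. Thus the whole theorem reduces to the single operator identity $E^{\mathcal{B}}E^{\mathcal{A}}M_u=E^{\mathcal{A}}M_uE^{\mathcal{B}}$, i.e. $E^{\mathcal{B}}(E^{\mathcal{A}}(uf))=E^{\mathcal{A}}(uE^{\mathcal{B}}f)$ for every $f\in L^2(\mathcal{F})$. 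The two tools I would use throughout are the tower property of nested conditional expectations and the pull-out property $E^{\mathcal{C}}(vg)=vE^{\mathcal{C}}(g)$ valid whenever $v$ is $\mathcal{C}$-measurable.

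I would then split into the two hypothesized cases. In the case $\mathcal{B}\subseteq\mathcal{A}$, note that $u$ is then $\mathcal{A}$-measurable as well, so on the one hand $E^{\mathcal{A}}(uE^{\mathcal{B}}f)=uE^{\mathcal{A}}(E^{\mathcal{B}}f)=uE^{\mathcal{B}}f$, using the pull-out property and the tower identity $E^{\mathcal{A}}E^{\mathcal{B}}=E^{\mathcal{B}}$; on the other hand $E^{\mathcal{B}}(E^{\mathcal{A}}(uf))=E^{\mathcal{B}}(uf)=uE^{\mathcal{B}}f$, using $E^{\mathcal{B}}E^{\mathcal{A}}=E^{\mathcal{B}}$ and then the $\mathcal{B}$-measurability of $u$. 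Both sides agree, so $E^{\mathcal{B}}$ commutes with $T$.

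For the case $\mathcal{A}\subseteq\mathcal{B}$, I would argue symmetrically. Since $u$ is $\mathcal{B}$-measurable, $uE^{\mathcal{B}}f=E^{\mathcal{B}}(uf)$, whence $E^{\mathcal{A}}(uE^{\mathcal{B}}f)=E^{\mathcal{A}}(E^{\mathcal{B}}(uf))=E^{\mathcal{A}}(uf)=Tf$, using $E^{\mathcal{A}}E^{\mathcal{B}}=E^{\mathcal{A}}$. Conversely, $E^{\mathcal{A}}(uf)$ is $\mathcal{A}$-measurable, hence $\mathcal{B}$-measurable, so it is fixed by $E^{\mathcal{B}}$, giving $E^{\mathcal{B}}(E^{\mathcal{A}}(uf))=E^{\mathcal{A}}(uf)=Tf$. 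Again the two expressions coincide and $E^{\mathcal{B}}$ commutes with $T$. In either case the commutation of the projection with $T$ yields that $L^2(\mathcal{B})$ reduces $E^{\mathcal{A}}M_u$, completing the argument.

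The routine chain of tower and pull-out identities is the easy part; the step I expect to require the most care is the justification of the pull-out property $E^{\mathcal{B}}(uf)=uE^{\mathcal{B}}f$ and of the conditionability of $uE^{\mathcal{B}}f$, since $u$ need only lie in $\mathcal{D}(E^{\mathcal{A}})$ and may be unbounded. Here I would lean on the standing assumption that $T=E^{\mathcal{A}}M_u$ is bounded together with the conditional-type Hölder inequality already invoked earlier in the section, which controls the relevant supports and integrability and legitimizes moving the $\mathcal{B}$-measurable factor $u$ through $E^{\mathcal{B}}$.
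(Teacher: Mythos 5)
Your proof is correct and is essentially the paper's argument in a different dress: the paper writes $E^{\mathcal{A}}M_u$ as a $2\times 2$ block matrix with respect to $L^2(\mathcal{B})\oplus \mathcal{K}(E^{\mathcal{B}})$ and shows the off-diagonal blocks vanish, which is exactly the commutation $E^{\mathcal{B}}E^{\mathcal{A}}M_u=E^{\mathcal{A}}M_uE^{\mathcal{B}}$ that you verify directly. Both arguments run on the same two identities, namely the commutation of the nested conditional expectations (your tower property, the paper's $E^{\mathcal{B}}E^{\mathcal{A}}=E^{\mathcal{A}}E^{\mathcal{B}}$) and the pull-out of the $\mathcal{B}$-measurable weight $u$ through $E^{\mathcal{B}}$.
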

\begin{proof} If we decompose  $L^2(\mathcal{F})$ as a direct sum $L^2(\mathcal{B})\oplus \mathcal{K}(E^{\mathcal{B}})$, then the corresponding block matrix of $E^{\mathcal{A}}M_u$ is as follow:
\begin{center}
$E^{\mathcal{A}}M_u=\left(
   \begin{array}{cc}
     E^{\mathcal{B}}E^{\mathcal{A}}M_uE^{\mathcal{B}} & E^{\mathcal{B}}E^{\mathcal{A}}M_u(I-E^{\mathcal{B}}) \\
     (I-E^{\mathcal{B}})E^{\mathcal{A}}M_uE^{\mathcal{B}} & (I-E^{\mathcal{B}})E^{\mathcal{A}}M_u(I-E^{\mathcal{B}}) \\
   \end{array}
 \right)
$.
 \end{center}
 If $\mathcal{B}\subseteq \mathcal{A}$ or $\mathcal{A}\subseteq \mathcal{B}$, then in both cases we have $E^{\mathcal{B}}E^{\mathcal{A}}=E^{\mathcal{A}}E^{\mathcal{B}}$. Hence we get that
 $$E^{\mathcal{B}}E^{\mathcal{A}}M_u(I-E^{\mathcal{B}})=E^{\mathcal{A}}(E^{\mathcal{B}}M_u-E^{\mathcal{B}}M_uE^{\mathcal{B}}).$$
 Since $u$ is $\mathcal{B}$-measurable, then we have $E^{\mathcal{B}}E^{\mathcal{A}}M_u(I-E^{\mathcal{B}})=0$. Similarly we have $(I-E^{\mathcal{B}})E^{\mathcal{A}}M_uE^{\mathcal{B}}=0$. These imply that $L^2(\mathcal{B})$ reduces $E^{\mathcal{A}}M_u$.
\end{proof}
Let $\mathcal{A},\mathcal{B}\subseteq \mathcal{F}$ be $\sigma$-sub algebras. In the next theorem we determine the relation between $\mathcal{A}$ and $\mathcal{B}$ if $L^2(\mathcal{B})$ reduces $E^{\mathcal{A}}M_u$ and the converse.
\begin{thm}\label{t2} Let $E^{\mathcal{A}}M_u$ be bounded on $L^2(\mathcal{F})$. Then we have the followings:\\

(a) If $\mathcal{B}\subseteq \mathcal{F}$  is a sigma subalgebra such that $E^{\mathcal{B}}E^{\mathcal{A}}=E^{\mathcal{B}\cap\mathcal{A}}$ and $u$ is $\mathcal{B}$-measurable, then the subspace $L^2(\mathcal{B})$ reduces $E^{\mathcal{A}}M_u$.\\

(b) If $L^2(\mathcal{B})$ reduces $E^{\mathcal{A}}M_u$, then $E^{\mathcal{B}}E^{\mathcal{A}}f=E^{\mathcal{A}}E^{\mathcal{B}}f$ for every $f\in L^2(S(E^{\mathcal{A}}(|u|^2)))$ and $u$ is $\mathcal{B}$-measurable.\\

(c) If $S(E^{\mathcal{A}}(|u|^2))=X$, then $L^2(\mathcal{B})$ reduces $E^{\mathcal{A}}M_u$ if and only if $E^{\mathcal{B}}E^{\mathcal{A}}=E^{\mathcal{B}\cap\mathcal{A}}$ and $u$ is $\mathcal{B}$-measurable.
\end{thm}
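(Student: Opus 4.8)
For part (a) I would follow the block-matrix device already used in the proof of Theorem~\ref{t1}. Decomposing $L^2(\mathcal{F})=L^2(\mathcal{B})\oplus\mathcal{K}(E^{\mathcal{B}})$, it suffices to show that the two off-diagonal blocks $E^{\mathcal{B}}E^{\mathcal{A}}M_u(I-E^{\mathcal{B}})$ and $(I-E^{\mathcal{B}})E^{\mathcal{A}}M_uE^{\mathcal{B}}$ vanish. The hypothesis $E^{\mathcal{B}}E^{\mathcal{A}}=E^{\mathcal{B}\cap\mathcal{A}}$ is an identity between $L^2$-selfadjoint operators, so taking adjoints gives $E^{\mathcal{A}}E^{\mathcal{B}}=E^{\mathcal{B}\cap\mathcal{A}}$ as well; hence $E^{\mathcal{A}}$ and $E^{\mathcal{B}}$ commute and both products equal $E^{\mathcal{C}}$ with $\mathcal{C}=\mathcal{A}\cap\mathcal{B}$. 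Using that $u$ is $\mathcal{B}$-measurable (so $E^{\mathcal{B}}(uf)=uE^{\mathcal{B}}f$ and $E^{\mathcal{B}}$ fixes $uE^{\mathcal{B}}f$), the first block collapses as $E^{\mathcal{C}}E^{\mathcal{B}}\bigl(u(I-E^{\mathcal{B}})f\bigr)=E^{\mathcal{C}}\bigl(uE^{\mathcal{B}}f-uE^{\mathcal{B}}f\bigr)=0$, and the second as $E^{\mathcal{A}}(uE^{\mathcal{B}}f)-E^{\mathcal{C}}(uE^{\mathcal{B}}f)=E^{\mathcal{A}}(uE^{\mathcal{B}}f)-E^{\mathcal{A}}E^{\mathcal{B}}(uE^{\mathcal{B}}f)=0$, since $uE^{\mathcal{B}}f$ is $\mathcal{B}$-measurable. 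Thus $L^2(\mathcal{B})$ reduces $E^{\mathcal{A}}M_u$.

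For part (b) set $T=E^{\mathcal{A}}M_u$ and $w=E^{\mathcal{A}}(|u|^2)$; reducibility of $L^2(\mathcal{B})$ means $E^{\mathcal{B}}$ commutes with $T,T^{*},TT^{*},T^{*}T$, so all identities of Proposition~\ref{p1} hold with $P=E^{\mathcal{B}}$. The target relation must hold on all of $S(w)$, which may be strictly larger than $S(u)$ (as $u$ can vanish on part of an $\mathcal{A}$-fibre where $w>0$), and the only item seeing $S(w)$ is the $TT^{*}$-commutation. The device I would use is that $TT^{*}=E^{\mathcal{A}}M_w=M_wE^{\mathcal{A}}$ is a bounded positive selfadjoint operator with $\ker(TT^{*})=\bigl(L^2(\mathcal{A})\cap L^2(S(w))\bigr)^{\perp}$; hence the orthogonal projection $Q$ onto $\overline{\operatorname{ran}(TT^{*})}=\overline{\operatorname{ran}\,T}=L^2(\mathcal{A})\cap L^2(S(w))$ is $Qf=\chi_{S(w)}E^{\mathcal{A}}f$. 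Since $E^{\mathcal{B}}$ commutes with $T$ and $T^{*}$ it preserves both $\overline{\operatorname{ran}\,T}$ and $\ker T^{*}=(\overline{\operatorname{ran}\,T})^{\perp}$, and a selfadjoint operator preserving a subspace and its orthocomplement commutes with the projection onto it; therefore $E^{\mathcal{B}}(\chi_{S(w)}E^{\mathcal{A}}f)=\chi_{S(w)}E^{\mathcal{A}}E^{\mathcal{B}}f$ for all $f$.

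This $Q$-commutation is the crux, and the main obstacle is to turn it into an honest commuting relation by showing $S(w)\in\mathcal{B}$. Feeding $f=\chi_{S_n}$ for $\mathcal{A}$-measurable $S_n\uparrow S(w)$ of finite measure (available as $S(w)\in\mathcal{A}$), the left side becomes $E^{\mathcal{B}}(\chi_{S_n})$ while the right side is supported in $S(w)$; letting $n\to\infty$ shows $E^{\mathcal{B}}(\chi_{S(w)})$ is supported in $S(w)$, and an integration argument (using $0\le E^{\mathcal{B}}\chi_{S(w)}\le 1$ and $\sigma$-finiteness of $\mathcal{B}$) forces $E^{\mathcal{B}}(\chi_{S(w)})=\chi_{S(w)}$, i.e. $S(w)\in\mathcal{B}$. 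Now $\chi_{S(w)}$ may be pulled out of $E^{\mathcal{B}}$ in the $Q$-relation, giving $\chi_{S(w)}E^{\mathcal{B}}E^{\mathcal{A}}f=\chi_{S(w)}E^{\mathcal{A}}E^{\mathcal{B}}f$, which is exactly $E^{\mathcal{B}}E^{\mathcal{A}}f=E^{\mathcal{A}}E^{\mathcal{B}}f$ for $f\in L^2(S(w))$. To obtain $\mathcal{B}$-measurability of $u$ I would return to Proposition~\ref{p1}(b): as $\bar u$ is supported in $S(u)\subseteq S(w)$, the relation just proved rewrites it as $\bar uE^{\mathcal{B}}g=E^{\mathcal{B}}(\bar u g)$ for every $g\in L^2(\mathcal{A})$; taking $g=\chi_{S_n}$ with $S_n\uparrow X$ (here $\bar u\chi_{S_n}\in L^1$ because boundedness of $T$ gives $w\in L^{\infty}$ and $E^{\mathcal{A}}(|u|)\le w^{1/2}$) exhibits $\bar u=\lim_n E^{\mathcal{B}}(\bar u\chi_{S_n})$ as a pointwise limit of $\mathcal{B}$-measurable functions. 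The delicate points throughout are this passage from $S(u)$ to $S(w)$ and the control of the limits against the possible non-integrability of $u$.

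Part (c) then follows by combining the previous parts. The implication $(\Leftarrow)$ is precisely part (a) and does not even require $S(w)=X$. For $(\Rightarrow)$, part (b) gives that $u$ is $\mathcal{B}$-measurable and that $E^{\mathcal{B}}E^{\mathcal{A}}f=E^{\mathcal{A}}E^{\mathcal{B}}f$ for all $f\in L^2(S(w))=L^2(\mathcal{F})$, so $E^{\mathcal{A}}$ and $E^{\mathcal{B}}$ commute on $L^2(\mathcal{F})$. Being the orthogonal projections onto $L^2(\mathcal{A})$ and $L^2(\mathcal{B})$, their product is then the selfadjoint projection onto $L^2(\mathcal{A})\cap L^2(\mathcal{B})=L^2(\mathcal{A}\cap\mathcal{B})$, that is $E^{\mathcal{B}}E^{\mathcal{A}}=E^{\mathcal{B}\cap\mathcal{A}}$, which completes the equivalence.
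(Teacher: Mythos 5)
Your proposal is correct, and while part (a) follows the paper essentially verbatim (the paper's proof of (a) is literally ``the condition implies $E^{\mathcal{B}}E^{\mathcal{A}}=E^{\mathcal{A}}E^{\mathcal{B}}$, then argue as in Theorem \ref{t1}'', which is what you spell out), your part (b) takes a genuinely different route. The paper stays entirely inside Proposition \ref{p1}: it applies part (b) with $P=E^{\mathcal{B}}$ to $f_n=\chi_{F_n}\uparrow 1$ to conclude $\bar u$ is $\mathcal{B}$-measurable, applies part (c) the same way to conclude $w:=E^{\mathcal{A}}(|u|^2)$ is $\mathcal{B}$-measurable, and then uses part (c) once more — now pulling the $\mathcal{B}$-measurable $w$ out of $E^{\mathcal{B}}$ to get $wE^{\mathcal{B}}E^{\mathcal{A}}f=wE^{\mathcal{A}}E^{\mathcal{B}}f$ and cancelling $w$ on $S(w)$ — to obtain the commutation relation. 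You instead argue operator-theoretically through the range projection: identifying $\overline{\operatorname{ran}}\,T=L^2(\mathcal{A})\cap L^2(S(w))$ with projection $Qf=\chi_{S(w)}E^{\mathcal{A}}f$, getting $E^{\mathcal{B}}Q=QE^{\mathcal{B}}$ from invariance of $\overline{\operatorname{ran}}\,T$ and $\ker T^{*}$, proving $S(w)\in\mathcal{B}$ by the integration argument, deducing the commutation, and only then recovering $\mathcal{B}$-measurability of $u$ — the reverse of the paper's order. Both arguments are sound; the paper's is shorter and purely measure-theoretic, while yours explains structurally why $L^2(S(w))$ is the natural domain for the commutation (it is exactly where $\overline{\operatorname{ran}}\,T$ lives) and is more careful about the integrability and limit-passage issues that the paper's terse ``these observations show'' glosses over. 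For (c), the paper says only ``direct consequence of (a) and (b)''; you supply the fact it leaves implicit, namely that commuting orthogonal projections multiply to the projection onto the intersection of their ranges and that $L^2(\mathcal{A})\cap L^2(\mathcal{B})=L^2(\mathcal{A}\cap\mathcal{B})$. Be aware that this last identification is only valid under the paper's standing ``up to $\mu$-null sets'' convention (i.e., with $\sigma$-subalgebras augmented by null sets); this caveat applies equally to the paper's own statement, so it is not a gap specific to your argument.
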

\begin{proof} (a) The condition $E^{\mathcal{B}}E^{\mathcal{A}}=E^{\mathcal{B}\cap\mathcal{A}}$ implies that $E^{\mathcal{B}}E^{\mathcal{A}}=E^{\mathcal{A}}E^{\mathcal{B}}$. Hence by the same method of Theorem \ref{t1} we get the result.\\

(b) Suppose that  $L^2(\mathcal{B})$ reduces $E^{\mathcal{A}}M_u$. Then by Proposition \ref{p1} part (b) we have $E^{\mathcal{B}}M_{\bar{u}}E^{\mathcal{A}}=M_{\bar{u}}E^{\mathcal{A}}E^{\mathcal{B}}$. Let $\{F_n\}$ be a sequence in $\mathcal{F}$ increasing to $X$ with $\mu(F_n)<\infty$, and put $f_n=\chi_{F_n}$. Then $f_n\in L^2(\mathcal{F})$ and $f_n\uparrow 1$, hence $E^{\mathcal{A}}(f_n)\uparrow 1$. These observations shows that $\bar{u}$ should be $\mathcal{B}$-measurable. Similarly by Proposition \ref{p1} part (c) we get that $E^{\mathcal{A}}(|u|^2)$ is $\mathcal{B}$- measurable. Again by part (c) of Proposition \ref{p1} we get that $E^{\mathcal{B}}E^{\mathcal{A}}f=E^{\mathcal{A}}E^{\mathcal{B}}f$ for every $f\in L^2(S(E^{\mathcal{A}}(|u|^2)))$.\\

(c) It is a direct consequence of parts (a) and (b).

\end{proof}
Here we find a $\sigma$-subalgebra $\mathcal{B}$ of $\mathcal{A}$ such that $L^2(\mathcal{B})$ reduces $E^{\mathcal{A}}M_u$.
\begin{thm}\label{t3}
Let $\mathcal{A}\subseteq \mathcal{F}$ be a $\sigma$-algebra and $E^{\mathcal{A}}M_u$ be a bounded operator on $L^2(\mathcal{F})$. Then there exists a $\sigma$-subalgebra $\mathcal{B}\subseteq \mathcal{F}$ such that $L^2(\mathcal{B})$ reduces $E^{\mathcal{A}}M_u$.
\end{thm}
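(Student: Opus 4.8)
The plan is to prove this existence statement by exhibiting one explicit $\sigma$-subalgebra and then invoking Theorem \ref{t1}. The guiding observation is that Theorem \ref{t1} already manufactures a reducing subspace $L^2(\mathcal{B})$ as soon as $\mathcal{B}$ is comparable to $\mathcal{A}$ (either $\mathcal{A}\subseteq\mathcal{B}$ or $\mathcal{B}\subseteq\mathcal{A}$) and $u$ is $\mathcal{B}$-measurable. Since $u$ need not be $\mathcal{A}$-measurable, the inclusion $\mathcal{B}\subseteq\mathcal{A}$ is unavailable in general; so I would go the other way and enlarge $\mathcal{A}$ just enough to render $u$ measurable.

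Concretely, I would set $\mathcal{B}=\sigma(\mathcal{A}\cup\sigma(u))$, the smallest $\sigma$-subalgebra of $\mathcal{F}$ containing $\mathcal{A}$ and making $u$ measurable (here $\sigma(u)$ is the $\sigma$-algebra generated by the sets $u^{-1}(G)$, $G\subseteq\Complex$ Borel). By construction one has $\mathcal{A}\subseteq\mathcal{B}\subseteq\mathcal{F}$ and $u$ is $\mathcal{B}$-measurable, so two of the three hypotheses of Theorem \ref{t1} hold for free; it remains only to legitimize $E^{\mathcal{B}}$ and to check that the comparability hypothesis feeds into Theorem \ref{t1}.

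The single point that needs verification is that $(X,\mathcal{B},\mu|_{\mathcal{B}})$ is $\sigma$-finite, which is exactly what makes $E^{\mathcal{B}}$ (and hence the orthogonal projection onto $L^2(\mathcal{B})$) well-defined. This is immediate: since $\mathcal{A}$ is $\sigma$-finite there is a sequence $F_n\in\mathcal{A}$ with $F_n\uparrow X$ and $\mu(F_n)<\infty$, and because $\mathcal{A}\subseteq\mathcal{B}$ the very same sequence exhausts $X$ inside $\mathcal{B}$. With $\sigma$-finiteness secured, the inclusion $\mathcal{A}\subseteq\mathcal{B}$ gives $E^{\mathcal{B}}E^{\mathcal{A}}=E^{\mathcal{A}}=E^{\mathcal{A}}E^{\mathcal{B}}$, so the block-matrix argument of Theorem \ref{t1} applies verbatim and yields that $L^2(\mathcal{B})$ reduces $E^{\mathcal{A}}M_u$.

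I do not expect a genuine obstacle: the assertion is purely an existence claim, and the real content is recognizing that $\sigma(\mathcal{A},u)$ is the canonical enlargement that simultaneously captures $u$ and stays comparable to $\mathcal{A}$. The only place asking for care is the $\sigma$-finiteness check, which is what entitles us to form $E^{\mathcal{B}}$. I would also flag that this $\mathcal{B}$ may coincide with $\mathcal{F}$ (when $\mathcal{A}$ and $u$ together generate $\mathcal{F}$), so the construction produces a reducing subalgebra but does not by itself guarantee a \emph{proper}, nontrivial reduction.
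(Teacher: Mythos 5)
Your proposal is correct and takes essentially the same route as the paper: the paper's proof defines exactly the same $\mathcal{B}$ (the $\sigma$-algebra generated by $u^{-1}(\mathcal{C})\cup\mathcal{A}$), observes that $\mathcal{A}$ and $\mathcal{B}$ are comparable with $u$ being $\mathcal{B}$-measurable, and likewise concludes by invoking Theorem \ref{t1}. Your explicit $\sigma$-finiteness check for $(X,\mathcal{B},\mu|_{\mathcal{B}})$ and your caveat that the resulting reducing subspace need not be proper are small refinements the paper leaves unstated.
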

\begin{proof}
Let $\mathcal{B}$ be the $\sigma$-algebra generated by the sets in $u^{-1}(\mathcal{C})\cup \mathcal{A}$, in which $\mathcal{C}$ is the $\sigma$-algebra of $\mathbb{C}$ such that $u:(X, \mathcal{F})\rightarrow (\mathbb{C},\mathcal{C}) $ is measurable. If $u$ is $\mathcal{A}$-measurable, then $\mathcal{B}\subseteq\mathcal{A}$ and if $u$ is not $\mathcal{A}$-measurable, then $\mathcal{A}\subseteq \mathcal{B}$. In both cases we have $E^{\mathcal{A}}E^{\mathcal{B}}=E^{\mathcal{B}}E^{\mathcal{A}}=E^{\mathcal{A}}$ and $u$ is $\mathcal{B}$-measurable. Hence by Theorem \ref{t1} we get that $L^2(\mathcal{B})$ reduces $E^{\mathcal{A}}M_u$.
\end{proof}

Now we recall a fundamental lemma from general operator theory.

\begin{lem}\label{l1} Let $(X, \mathcal{F}, \mu)$ be a finite measure space, $T\in \mathcal{B}(L^2(\mathcal{F}))$ and $S$ be a closed operator on $L^2(\mathcal{F})$. If $T=S$ on a dense subset of $L^2(\mathcal{F})$, then $S$ is bounded and $T=S$.
\end{lem}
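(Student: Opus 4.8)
The plan is to argue directly from the definition of a closed operator, using density to produce approximating sequences and the boundedness of $T$ to control their images. Write $D$ for the dense subspace on which $T=S$; thus $D\subseteq\mathcal{D}(S)$ and $Sf=Tf$ for every $f\in D$, where $\mathcal{D}(S)$ denotes the domain of the closed operator $S$. The whole content of the lemma is to upgrade this agreement on $D$ to agreement on all of $L^2(\mathcal{F})$.

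Fix an arbitrary $f\in L^2(\mathcal{F})$. Since $D$ is dense, I would choose a sequence $(f_n)\subseteq D$ with $f_n\to f$ in $L^2(\mathcal{F})$. Boundedness of $T$ gives $Tf_n\to Tf$, and because each $f_n\in D$ we have $Sf_n=Tf_n$, so $Sf_n\to Tf$ as well. Now I invoke the closedness of $S$: from $f_n\to f$ together with the convergence $Sf_n\to Tf$ it follows that $f\in\mathcal{D}(S)$ and $Sf=Tf$.

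Since $f$ was arbitrary, this shows simultaneously that $\mathcal{D}(S)=L^2(\mathcal{F})$ and that $S=T$ on all of $L^2(\mathcal{F})$; in particular $S$ inherits the boundedness of $T$, which is exactly the conclusion.

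I do not expect a genuine obstacle here, since this is the standard observation that a closed operator agreeing with a bounded one on a dense set must equal that bounded operator. The only point requiring care is the correct application of the definition of closedness: one must supply \emph{both} the convergence $f_n\to f$ and the convergence of $Sf_n$, and it is precisely the boundedness of $T$ combined with the identity $Sf_n=Tf_n$ on $D$ that delivers the latter. I would also note that the finite measure hypothesis is not actually used in this general statement and could be dropped; it presumably appears only because the ambient setting of the paper is measure-theoretic.
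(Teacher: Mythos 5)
Your proof is correct, and it is the standard (essentially the only) argument: approximate an arbitrary $f$ by a sequence in the dense set, use boundedness of $T$ to get convergence of the images, and invoke closedness of $S$ to conclude $f\in\mathcal{D}(S)$ with $Sf=Tf$. Note that the paper itself offers no proof of this lemma --- it is merely ``recalled'' as a fact from general operator theory --- so your write-up supplies exactly the argument the paper leaves implicit; your side remark is also accurate that the finite-measure hypothesis (and indeed the specific Hilbert space $L^2(\mathcal{F})$) plays no role, as the statement holds for a bounded and a closed operator on any Banach space.
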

 A $\ast$-subalgebra of $\mathcal{B}(L^2(\mathcal{F}))$ is called a Von Neumann algebra on the Hilbert space $L^2(\mathcal{F})$ if it is closed in strong operator topology (SOT). Now we get a Von Neumann algebra of WCE operators in the next theorem.
\begin{thm}\label{t5}
 If $(X, \mathcal{F}, \mu)$ is a finite measure space and $\mathcal{A}\subset \mathcal{F}$ is a $\sigma$-subalgebra, then $\mathcal{W}=\{E^{\mathcal{A}}M_g: g\in L^{\infty}(\mathcal{A})\}$ is a unital commutative Von Neumann algebra with unit $E^{\mathcal{A}}$.
  \end{thm}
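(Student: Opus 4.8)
The plan is to show that $\mathcal{W}=\{E^{\mathcal{A}}M_g: g\in L^{\infty}(\mathcal{A})\}$ satisfies every defining property of a Von Neumann algebra: it is a $\ast$-subalgebra of $\mathcal{B}(L^2(\mathcal{F}))$, it is unital and commutative, and it is closed in the strong operator topology. First I would verify the algebraic structure. Since $g\in L^{\infty}(\mathcal{A})$, every $E^{\mathcal{A}}M_g$ is bounded on $L^2(\mathcal{F})$ with norm at most $\|g\|_{\infty}$. Because $g$ is $\mathcal{A}$-measurable, the identity $E^{\mathcal{A}}(gf)=gE^{\mathcal{A}}(f)$ holds, so on $L^2(\mathcal{F})$ the operator $E^{\mathcal{A}}M_g$ acts as $M_gE^{\mathcal{A}}=E^{\mathcal{A}}M_gE^{\mathcal{A}}$. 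This representation makes the computations transparent: for $g,h\in L^{\infty}(\mathcal{A})$ one gets
\begin{equation*}
(E^{\mathcal{A}}M_g)(E^{\mathcal{A}}M_h)=M_gE^{\mathcal{A}}M_hE^{\mathcal{A}}=M_{gh}E^{\mathcal{A}}=E^{\mathcal{A}}M_{gh},
\end{equation*}
which shows $\mathcal{W}$ is closed under multiplication, that multiplication is commutative (since $gh=hg$), and that $E^{\mathcal{A}}=E^{\mathcal{A}}M_1$ is a two-sided unit. Closure under addition and scalar multiplication is immediate from linearity, and the adjoint formula $(E^{\mathcal{A}}M_g)^{\ast}=M_{\bar{g}}E^{\mathcal{A}}=E^{\mathcal{A}}M_{\bar{g}}$ shows $\mathcal{W}$ is self-adjoint, so $\mathcal{W}$ is a unital commutative $\ast$-subalgebra.

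The substantive part is SOT-closedness. I would take a net (or, since $(X,\mathcal{F},\mu)$ is finite and one may expect separability, a sequence) $E^{\mathcal{A}}M_{g_n}$ in $\mathcal{W}$ converging in SOT to some $T\in\mathcal{B}(L^2(\mathcal{F}))$, and show $T=E^{\mathcal{A}}M_g$ for some $g\in L^{\infty}(\mathcal{A})$. The natural candidate for $g$ comes from testing the convergence on the constant function $\chi_X\in L^2(\mathcal{F})$, which lies in $L^2$ precisely because $\mu$ is finite: setting $g:=T\chi_X$ and using $E^{\mathcal{A}}M_{g_n}\chi_X=g_n$, I get $g_n\to g$ in $L^2(\mathcal{F})$, and since each $g_n$ is $\mathcal{A}$-measurable and $L^2(\mathcal{A})$ is closed, $g$ is $\mathcal{A}$-measurable. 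I then need to argue that $g\in L^{\infty}(\mathcal{A})$, using that $\sup_n\|E^{\mathcal{A}}M_{g_n}\|$ is finite by the uniform boundedness principle applied to the SOT-convergent net, together with the norm identity $\|E^{\mathcal{A}}M_{g_n}\|=\|g_n\|_{\infty}$; passing to an a.e.\ convergent subsequence gives $\|g\|_{\infty}\le\sup_n\|g_n\|_{\infty}<\infty$.

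Finally I would identify $T$ with $E^{\mathcal{A}}M_g$. The cleanest route is to show the two operators agree on a dense set and then invoke Lemma \ref{l1}. On any bounded $\mathcal{F}$-measurable function $f$ (these are dense in $L^2(\mathcal{F})$ since $\mu$ is finite), I would compute $\|E^{\mathcal{A}}M_{g_n}f-E^{\mathcal{A}}M_gf\|_2=\|E^{\mathcal{A}}((g_n-g)f)\|_2\le\|f\|_{\infty}\|g_n-g\|_2\to 0$, using the $\mathcal{A}$-measurability to pull out and the $L^2$-convergence $g_n\to g$; combined with $E^{\mathcal{A}}M_{g_n}f\to Tf$ from SOT-convergence, this forces $Tf=E^{\mathcal{A}}M_gf$ on a dense subspace. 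Lemma \ref{l1} then upgrades this to equality on all of $L^2(\mathcal{F})$, giving $T=E^{\mathcal{A}}M_g\in\mathcal{W}$. The main obstacle I anticipate is the $L^{\infty}$-bound on the limit $g$: SOT-convergence only gives pointwise (in the Hilbert-space sense) control, so the uniform boundedness principle and the exact norm formula $\|E^{\mathcal{A}}M_g\|=\|g\|_{\infty}$ are essential to prevent the limit from escaping $L^{\infty}(\mathcal{A})$, and care is needed to justify that norm identity (via testing on functions concentrated where $|g|$ is near its essential supremum).
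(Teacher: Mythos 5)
Your algebraic verification and your final identification of $T$ with $E^{\mathcal{A}}M_g$ on the bounded functions are correct and essentially coincide with the paper's proof: the paper likewise tests SOT-convergence on the constant function $\mathbf{1}$, obtains $g_\alpha\to T(\mathbf{1})$ in $L^2$, shows $T=E^{\mathcal{A}}M_{T(\mathbf{1})}$ on $L^{\infty}(\mathcal{F})$ by exactly your triangle-inequality estimate, and then invokes Lemma \ref{l1}. The genuine gap is your middle step, where you claim $g\in L^{\infty}(\mathcal{A})$ ``by the uniform boundedness principle applied to the SOT-convergent net.'' Banach--Steinhaus gives boundedness of a pointwise convergent \emph{sequence} of operators; an SOT-convergent \emph{net} in $\mathcal{B}(L^2(\mathcal{F}))$ need not be bounded, since every SOT-neighborhood of $0$ constrains only finitely many vectors and therefore contains operators of arbitrarily large norm (e.g.\ rank-one operators of huge norm supported orthogonally to those vectors). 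Your fallback --- working with sequences because finiteness of $\mu$ ``suggests'' separability --- does not close the hole: a finite measure space can have non-separable $L^2$ (an uncountable product of two-point probability spaces), and even on a separable space sequential SOT-closedness is a priori weaker than SOT-closedness, because SOT is not first countable on $\mathcal{B}(H)$; making sequences suffice would need an extra ingredient such as the Kaplansky density theorem, which you do not invoke.

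The good news is that your own last step already contains the repair, and it is what the paper does: no bound on the net is needed anywhere. From $g_\alpha\to g:=T(\mathbf{1})$ in $L^2$ alone, your estimate $\|E^{\mathcal{A}}((g_\alpha-g)f)\|_2\le\|f\|_{\infty}\|g_\alpha-g\|_2$ identifies $T$ with the closed operator $E^{\mathcal{A}}M_g$ on the dense subspace $L^{\infty}(\mathcal{F})$, and Lemma \ref{l1} then forces $E^{\mathcal{A}}M_g$ to be bounded and equal to $T$. The membership $g\in L^{\infty}(\mathcal{A})$ should be deduced \emph{afterwards} from boundedness of $T$, not from a bound on the approximating family: for $\varepsilon>0$ the set $B=\{|g|\ge\|T\|+\varepsilon\}$ lies in $\mathcal{A}$ (as $g$ is $\mathcal{A}$-measurable) and $\chi_B\in L^2(\mathcal{A})$ (here finiteness of $\mu$ is used), so from $T\chi_B=g\chi_B$ we get $(\|T\|+\varepsilon)\mu(B)^{1/2}\le\|g\chi_B\|_2=\|T\chi_B\|_2\le\|T\|\mu(B)^{1/2}$, forcing $\mu(B)=0$. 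Deleting the uniform-boundedness step and inserting this observation makes your proof complete and valid for nets; incidentally, this same observation is needed to finish the paper's own proof, which stops at $T=E^{\mathcal{A}}M_{T(\mathbf{1})}$ without explicitly checking that $T(\mathbf{1})\in L^{\infty}(\mathcal{A})$.
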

\begin{proof} It is easy to see that $\mathcal{W}$ is a self-adjoint operator subalgebra of $\mathcal{B}(H)$. Then we only need to prove $\mathcal{W}$ is strongly closed. Let $\{E^{\mathcal{A}}M_{u_{\alpha}}\}_{\alpha}\subseteq \mathcal{W}$ and $T\in \mathcal{B}(L^2(\mathcal{F}))$ such that $\|E^{\mathcal{A}}(u_{\alpha}f)-T(f)\|_{L^2}\rightarrow 0$ for all $f\in L^2(\mathcal{F})$. Hence for the constant function $\mathbf{1}$  we have $\|u_{\alpha}-T(\mathbf{1})\|_{L^2}\rightarrow 0$ and so $T(\mathbf{1})$ is $\mathcal{A}$-measurable. Also, for every $f\in L^{\infty}(\mathcal{F})$ and $\alpha$ we have
\begin{align*}
\|T(\mathbf{1})E^{\mathcal{A}}(f)-T(f)\|_{L^2}&\leq \|T(\mathbf{1})E^{\mathcal{A}}(f)-E^{\mathcal{A}}(u_{\alpha}f)\|_{L^2}+\|E^{\mathcal{A}}(u_{\alpha}f)-T(f)\|_{L^2}\\
&\leq \|T(\mathbf{1})-u_{\alpha}\|_{L^2}\|f\|_{\infty}+\|E^{\mathcal{A}}(u_{\alpha}f)-T(f)\|_{L^2}.
\end{align*}
This implies that $T=E^{\mathcal{A}}M_{T(\mathbf{1})}$ on $L^{\infty}(\mathcal{F})$. Since $L^{\infty}(\mathcal{F})$  is dense in $L^2(\mathcal{F})$ and $E^{\mathcal{A}}M_{T(\mathbf{1})}$ is closed, then by Lemma \ref{l1} we get that $E^{\mathcal{A}}M_{T(\mathbf{1})}$ is bounded and $T=E^{\mathcal{A}}M_{T(\mathbf{1})}$. Therefore $\mathcal{W}$ is strongly closed and consequently is a unital commutative Von Neumann algebra with unit $E^{\mathcal{A}}$.
\end{proof}
Let $M$ be a closed subspace of $L^2(\mathcal{F})$ that reduces $E^{\mathcal{A}}M_u$. Let $\mathcal{B}_0$ be the $\sigma$- subalgebra generated by $\{f^{-1}(\mathcal{C}): f\in M\}\cup u^{-1}(\mathcal{C})$, $\mathcal{B}_1$ be the $\sigma$-algebra generated by $\{f^{-1}(\mathcal{C}): f\in M\}\cup u^{-1}(\mathcal{C})\cup \mathcal{A}$ and $\mathcal{B}_2$ be the $\sigma$-algebra generated by $\{f^{-1}(\mathcal{C}): f\in M\}\cup \mathcal{A}$. Then $\mathcal{B}_0\subseteq \mathcal{B}_1$, $M\subseteq L^2(\mathcal{B}_0)\subseteq L^2(\mathcal{B}_1)$ and the subspaces $L^2(\mathcal{B}_0)$,  $L^2(\mathcal{B}_1)$ are reducing subspaces for $E^{\mathcal{A}}M_u$. Also, we have $\mathcal{B}_0\subseteq \mathcal{B}_2\subseteq \mathcal{B}_1$, if $u$ is $\mathcal{A}$-measurable, moreover $L^2(\mathcal{B}_2)$  reduces $E^{\mathcal{A}}M_u$.\\

In the sequel we provide some necessary and sufficient conditions for a reducing subspace to be of the form $L^2(A)$ or $L^2(\mathcal{B})$.
\begin{thm}\label{t6} Let $(X, \mathcal{F}, \mu)$ be a finite measure space, $M\subseteq L^2(\mathcal{F})$ be a reducing subspace for $E^{\mathcal{A}}M_u$, $P$ be the orthogonal projection onto $M$ and $p=P(1)$. Then there exists a $\sigma$- algebra $\mathcal{M}\subseteq \mathcal{A}$ such that $L^2(\mathcal{M})$ reduces $E^{\mathcal{A}}M_u$ and the followings hold:\\

(a) If $S(E^{\mathcal{A}}(|u|^2))=X$, then $Pf=E^{\mathcal{A}}(p)f$, for all $f\in L^2(\mathcal{M})$.\\

(b) If $M= L^2(A)$ for some $A\in \mathcal{F}$, then $\mathcal{M}$ is equal to the $\sigma$- algebra generated by $\mathcal{A}_{A}$, in which $\mathcal{A}_{A}=\{B\in \mathcal{A}: A\cap B\in \mathcal{A}\}$.\\

(c) If $\mathcal{M}=\mathcal{A}$, then $E^{\mathcal{A}}(M)=f.L^2(\mathcal{A})\subseteq L^1(S(f))$, for some $f\in L^2(\mathcal{A})$.\\

(d) $L^2(\mathcal{M})\subseteq M$ if and only if $1\in M$.\\

(e) $L^2(\mathcal{M})\subseteq M^{\perp}$ if and only if $1\in M^{\perp}$.

\end{thm}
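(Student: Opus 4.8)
The plan is to manufacture $\mathcal{M}$ from the commutation relations of Proposition \ref{p1} and to extract (a)--(e) from the position of $P$ relative to the von Neumann algebra $\mathcal{W}$ of Theorem \ref{t5}. Set $w=E^{\mathcal{A}}(|u|^{2})$. Proposition \ref{p1}(e) says $P$ commutes with $TT^{\ast}=E^{\mathcal{A}}M_{w}\in\mathcal{W}$; hence $P$ commutes with every operator in the abelian von Neumann algebra generated by $TT^{\ast}$, and by Theorem \ref{t5} this is the algebra of all $E^{\mathcal{A}}M_{\varphi}$ with $\varphi$ a bounded Borel function of $w$. I would therefore take $\mathcal{M}$ to be the $\sigma$-subalgebra of $\mathcal{A}$ generated by $w$ (completed with the null sets). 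By construction $\mathcal{M}\subseteq\mathcal{A}$, $S(w)\in\mathcal{M}$, and $P$ lies in the commutant of $\{E^{\mathcal{A}}M_{\varphi}:\varphi\in L^{\infty}(\mathcal{M})\ \text{supported in}\ S(w)\}$; this commutation is the engine for everything that follows.

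To see that $L^{2}(\mathcal{M})$ reduces $T$ I would appeal to Theorem \ref{t2}(a): since $\mathcal{M}\subseteq\mathcal{A}$ the identity $E^{\mathcal{M}}E^{\mathcal{A}}=E^{\mathcal{M}}=E^{\mathcal{M}\cap\mathcal{A}}$ is automatic, so the only thing to verify is that $u$ is $\mathcal{M}$-measurable. This is the step I expect to be the main obstacle: $u$ is generally only $\mathcal{F}$-measurable while $\mathcal{M}\subseteq\mathcal{A}$, so one must argue that the structure forced by $M$ already makes $u$ measurable with respect to $\sigma(w)$ off a null set, presumably by combining the test-function argument from the proof of Theorem \ref{t2}(b) (apply Proposition \ref{p1}(b) to $f_{n}=\chi_{F_{n}}\uparrow 1$) with the fact that $u$ vanishes off $S(w)$, since $E^{\mathcal{A}}(|u|^{2})=0$ forces $u=0$ there. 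I would isolate this as a separate lemma.

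Granting the commutant description, the remaining items are short. For (d): $1\in L^{2}(\mathcal{M})$ always, so $L^{2}(\mathcal{M})\subseteq M$ gives $1=P1\in M$; conversely if $1\in M$ then $p=P1=1$, and for $\varphi\in L^{\infty}(\mathcal{M})$ the commutation gives $P\varphi=PE^{\mathcal{A}}M_{\varphi}1=E^{\mathcal{A}}M_{\varphi}P1=E^{\mathcal{A}}(\varphi)=\varphi$, whence $L^{2}(\mathcal{M})\subseteq M$; the trivial behaviour of $\mathcal{M}$ on $S(w)^{c}$ handles the part of $\varphi$ supported off $S(w)$. Part (e) is the same computation with $I-P$ and $1-p$ in place of $P$ and $p$. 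For (a), when $S(w)=X$ the conditional Hölder inequality gives $\overline{\mathrm{Ran}\,T}=L^{2}(\mathcal{A})$, so $P$ preserves $L^{2}(\mathcal{A})$ and restricts there to a projection; since it commutes with the maximal abelian multiplication algebra of $L^{2}(\mathcal{M})$, its restriction is $M_{\psi}$ with $\psi=P1=E^{\mathcal{A}}(p)$, giving $Pf=E^{\mathcal{A}}(p)f$ on $L^{2}(\mathcal{M})$ (and incidentally $\psi^{2}=\psi$).

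Finally (b) and (c) are case computations. For (b) I would put $P=M_{\chi_{A}}$, $p=\chi_{A}$, and check that $B\in\mathcal{M}$ precisely when $E^{\mathcal{A}}(\chi_{A})\chi_{B}=\chi_{A}\chi_{B}$, i.e. when $A\cap B\in\mathcal{A}$, identifying $\mathcal{M}$ with the $\sigma$-algebra generated by $\mathcal{A}_{A}$; for (c), with $\mathcal{M}=\mathcal{A}$ the operator $P$ commutes with all of $\mathcal{W}$, and running Proposition \ref{p1}(a) on the constant function yields $E^{\mathcal{A}}(M)=f\,L^{2}(\mathcal{A})$ with $f=E^{\mathcal{A}}(p)$, the inclusion in $L^{1}(S(f))$ following from finiteness of $\mu$. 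I expect the reconciliation of the $u$-dependent spectral construction of $\mathcal{M}$ with the $u$-independent description in (b), together with the measurability lemma behind the reduction, to absorb essentially all of the difficulty; the rest is formal once $P$ is known to commute with $\{E^{\mathcal{A}}M_{\varphi}:\varphi\in L^{\infty}(\mathcal{M})\}$.
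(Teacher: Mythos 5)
Your construction of $\mathcal{M}$ is genuinely different from the paper's, and the difference is fatal rather than cosmetic. The paper builds $\mathcal{M}$ out of the projection $P$ itself: it sets $\mathcal{M}_0=\{B\in\mathcal{A}:\ E^{\mathcal{A}}M_{\chi_B}P=PE^{\mathcal{A}}M_{\chi_B}\}$ and lets $\mathcal{M}$ be the $\sigma$-algebra this generates, so $\mathcal{M}$ records exactly which members of the von Neumann algebra of Theorem \ref{t5} commute with $P$. Your $\mathcal{M}=\sigma(w)$, $w=E^{\mathcal{A}}(|u|^2)$, depends only on $TT^{\ast}$ and not on $M$ at all. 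That already makes part (b) unprovable: there $\mathcal{M}$ is required to be $\sigma(\mathcal{A}_A)$, which varies with $A$, while $\sigma(w)$ is a single fixed $\sigma$-algebra (indeed, for the paper's $\mathcal{M}_0$ one gets $\mathcal{M}_0=\mathcal{A}_A$ when $P=M_{\chi_A}$, which is the whole content of (b)). There is also a smaller inaccuracy in your commutant step: the von Neumann algebra generated by $TT^{\ast}$ consists of the operators $\varphi(TT^{\ast})=E^{\mathcal{A}}M_{\varphi(w)}+\varphi(0)(I-E^{\mathcal{A}})$, so commutation of $P$ with $E^{\mathcal{A}}M_{\varphi(w)}$ only follows when $\varphi(0)=0$; commutation with $E^{\mathcal{A}}$ itself is not automatic, a point the paper flags explicitly when it distinguishes the case $PE^{\mathcal{A}}=E^{\mathcal{A}}P$.

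The deeper problem is the lemma you defer, namely that the existence of the reducing subspace $M$ forces $u$ to be $\sigma(w)$-measurable: this is false, so the gap cannot be filled. Take $X=\{1,2,3,4\}$ with counting measure, $\mathcal{A}$ generated by the atoms $\{1,2\}$ and $\{3,4\}$, $u=(1,-1,1,1)$, and $M=L^2(\{1,2\})$. Since $\{1,2\}\in\mathcal{A}$, the paper's characterization of when $L^2(A)$ reduces $E^{\mathcal{A}}M_u$ shows $M$ reduces $T=E^{\mathcal{A}}M_u$ (direct verification is immediate). Here $|u|^2\equiv 1$, so $w\equiv 1$, $S(w)=X$, and $\sigma(w)$ is trivial: $u$ is not $\sigma(w)$-measurable, and $L^2(\sigma(w))$, the constants, is not even $T$-invariant, since $T\mathbf{1}=E^{\mathcal{A}}(u)=\chi_{\{3,4\}}$. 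Thus with your $\mathcal{M}$ both the central reduction claim and (b) fail. You should also be aware that the obstacle you correctly identified is fatal beyond your route: in this example the only $\sigma$-subalgebras of $\mathcal{A}$ are the trivial one and $\mathcal{A}$ itself, and $L^2(\mathcal{A})$ is not $T^{\ast}$-invariant either, because $T^{\ast}\chi_{\{1,2\}}=u\chi_{\{1,2\}}\notin L^2(\mathcal{A})$ (consistently, Theorem \ref{t2}(b) says any such $\mathcal{M}$ would force $u$ to be $\mathcal{A}$-measurable, which it is not). So no $\sigma$-subalgebra of $\mathcal{A}$ works at all here; the paper's own proof glosses over this at the unjustified jump from ``$E^{\mathcal{A}}M_{ug}$ and $E^{\mathcal{A}}M_{E^{\mathcal{A}}(u)g}$ commute with $P$'' to ``$L^{\infty}(\mathcal{M}_0)$ reduces $E^{\mathcal{A}}M_u$'', which is where any honest argument, yours included, must break.
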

\begin{proof} Put
$$W=\{E^{\mathcal{A}}M_g: g\in L^{\infty}(\mathcal{A}), \ \ \ \ E^{\mathcal{A}}M_gP=PE^{\mathcal{A}}M_g\}.$$

It is easy to see that $W$ is a Von Neumann subalgebra of $\{E^{\mathcal{A}}M_g: g\in L^{\infty}(\mathcal{A})\}$. Let
$$\mathcal{M}_0=\{A\in \mathcal{A}: E^{\mathcal{A}}M_{\chi_A}\in W\}.$$
Easily we have that $\emptyset \in \mathcal{M}_0$ and $A\cap B\in \mathcal{M}_0$, for all $A, B\in \mathcal{M}_0$.  Also we have $W=\{E^{\mathcal{A}}M_g: g\in L^{\infty}(\mathcal{M}_0)\}$. If we put $\mathcal{M}=\mathcal{M}_0\cup \{X\}$, then $\mathcal{M}$ is a $\sigma$- subalgebra of $\mathcal{A}$. Also, $\mathcal{M}_0$ is a $\sigma$- algebra when $E^{\mathcal{A}}P=PE^{\mathcal{A}}$, in this case we set $\mathcal{M}=\mathcal{M}_0$.  In general we suppose that $\mathcal{M}$ is the $\sigma$- algebra generated by $\mathcal{M}_0$. Now we show that $L^2(\mathcal{M})$ reduces $E^{\mathcal{A}}M_u$, for this, it suffices to show that  $L^{\infty}(\mathcal{M}_0)$ reduces $E^{\mathcal{A}}M_u$. Let $g\in L^{\infty}(\mathcal{M}_0)$ and $f\in L^{2}(\mathcal{F})$. Then
\begin{align*}
E^{\mathcal{A}}M_{E^{\mathcal{A}}(u)g}Pf&=E^{\mathcal{A}}M_uE^{\mathcal{A}}M_gPf\\
&=E^{\mathcal{A}}M_uPE^{\mathcal{A}}M_gf\\
&=PE^{\mathcal{A}}M_uPE^{\mathcal{A}}M_gf\\
&=PE^{\mathcal{A}}M_{E^{\mathcal{A}}(u)g}f.
\end{align*}
Also we have
\begin{align*}
PE^{\mathcal{A}}M_{ug}f&=PE^{\mathcal{A}}M_gE^{\mathcal{A}}M_uf\\
&=E^{\mathcal{A}}M_gPE^{\mathcal{A}}M_uf\\
&=E^{\mathcal{A}}M_gE^{\mathcal{A}}M_uPf\\
&=E^{\mathcal{A}}M_{ug}Pf.
\end{align*}
By these observations  we get that $L^{\infty}(\mathcal{M}_0)$ is invariant under $E^{\mathcal{A}}M_u$ and $M_uE^{\mathcal{A}}$. Hence we get that $L^{\infty}(\mathcal{M})$ reduces $E^{\mathcal{A}}M_u$. Consequently, $L^{2}(\mathcal{M})$ reduces $E^{\mathcal{A}}M_u$\\

 Also for $f\in L^{2}(\mathcal{M}_0)$ we have
\begin{align*}
P(f)&=PE^{\mathcal{A}}(f.1)\\
&=PE^{\mathcal{A}}M_f(1)\\
&=E^{\mathcal{A}}M_fP(1)\\
&=f.E^{\mathcal{A}}(P(1))\\
&=M_{E^{\mathcal{A}}(P(1))}(f).
\end{align*}
Hence we have (a).\\

(b) Suppose that $M=L^2(A)$ for some $A\in \mathcal{F}$. Then $P=M_{\chi_A}$ and $\mathcal{M}_0=\{B\in \mathcal{A}: A\cap B\in \mathcal{A}\}=\mathcal{A}_A$. Thus $\mathcal{M}$ is equal to the $\sigma$- algebra generated by $\mathcal{A}_A$.\\

(c) If $\mathcal{M}=\mathcal{A}$, then by part (a) for every $f\in L^2(\mathcal{A})$ we have $P(f)=E^{\mathcal{A}}(p)f$. Also, for every $B\in \mathcal{A}\setminus \{X\}$ we have $B\in \mathcal{M}_0$ and so for all $f\in L^2(\mathcal{F})$, $$E^{\mathcal{A}}(\chi_BP(f))=P(E^{\mathcal{A}}(\chi_Bf))=E^{\mathcal{A}}(p)\chi_BE^{\mathcal{A}}(f).$$
This implies that  $E^{\mathcal{A}}P(f)=E^{\mathcal{A}}(p)E^{\mathcal{A}}(f)$ for all $f\in L^2(\mathcal{F})$. Therefore $E^{\mathcal{A}}(M)=E^{\mathcal{A}}(p).L^2(\mathcal{A})$.\\

(d) If $L^2(\mathcal{M})\subseteq M$, then $1\in L^2(\mathcal{M})\subseteq M$. Conversely, if $1\in M$, then $p=P(1)=1$ and so $E^{\mathcal{A}}(p)=E^{\mathcal{A}}(1)=1$. Hence by (a) we get that $P(f)=f$ for all $f\in L^2(M)$. This means that $L^2(\mathcal{M})\subseteq M$.\\

(e) If $L^2(\mathcal{M})\subseteq M^{\perp}$, then $1\in L^2(\mathcal{M})\subseteq M^{\perp}$. Conversely, if $1\in M^{\perp}$, then $p=P(1)=0$ and so $P(f)=0.f$, for all $f\in L^2(M)$. This implies that $L^2(\mathcal{M})\subseteq M^{\perp}$.

\end{proof}

\end{document}